\newcommand{\C}{\mathbb C}
\newcommand{\R}{\mathbb R}
\newcommand{\vnorm}[1]{{\| #1 \|}}
\DeclareMathOperator{\spanc}{span}
\newtheorem{theo}{Theorem}[section]
\newtheorem{lemma}[theo]{Lemma}
\theoremstyle{remark}
\newtheorem{remark}[theo]{Remark}
\theoremstyle{example}
\newtheorem{example}[theo]{Example}
\theoremstyle{definition}
\newtheorem{defi}[theo]{Definition}
\numberwithin{equation}{section}
\begin{document}

\begin{abstract}
We construct generalized stationary discs to perturbations of decoupled real submanifolds of codimension $2$ in $\C^4$.  
\end{abstract} 

\thanks{Research of the second  author was  supported by the Center for Advanced Mathematical Sciences}

\author[Al Masri, Bertrand, Mchaimech, Oueidat, Zoghaib]{Mohammad Tarek Al Masri, Florian Bertrand, Jad Mchaimech, Lea Oueidat, Hadi Zoghaib}
\title[stationary discs for decoupled  submanifolds in $\C^4$.
]{Construction of stationary discs for perturbations of decoupled submanifolds in $\C^4$.}

\subjclass[2010]{32V40}
\keywords{stationary discs, CR submanifolds}
\maketitle


\section*{Introduction}

The method of stationary discs introduced by Lempert (\cite{le}, see also \cite{hu, tu}) has recently proven to be well adapted in the study of the jet determination of CR diffeomorpshims between real submanifolds of $\C^N$ 
\cite{be-bl,be-de1,be-de-la, be-bl-me, tu2}. One of the key features of this family of discs is the fact that they usually form a submanifold (of the Banach space of analytic discs) of finite dimension. 
The existence of stationary discs relies on a Riemann-Hilbert type problem and is  well understood in many cases; see for instance \cite{tu, bl, be-bl,be-bl-me,tu2} for nondegenerate real submanifolds, and \cite{be-de1,be-de-la} for degenerate real hypersurfaces. It is then natural to study these discs for more general submanifods and in particular for degenerate real submanifolds of higher codimension. As a first step in this program, we construct  stationary discs to pertubations of a decoupled degenerate submanifold of codimension $2$ in $\C^4$ (Theorem \ref{theodiscs}). Similarly to \cite{be-de1,be-de-la}, we need consider generalized stationary discs in order to take into account the order of degeneracy of the given submanifold.  
\section{Preliminaries}

We denote by $\Delta$ the unit disc in $\C$ and by $\partial \Delta$ its boundary. For a positive integer $N>0$, the set $Gl_N(\C)$ denotes the general linear group on $\C^N$.

\vspace{0.5cm}

Let $M \subset \C^{4}$ be a finitely smooth  real submanifold of real codimension $2$ given locally by 
\begin{equation}\label{eqdec}
\begin{cases}
 r_1=\Re e  w_1- P_1(z_1,\overline{z_1})+ O(d_1+1)=0\\
r_2=\Re e  w_2 - P_2(z_2,\overline{z_2})+ O(d_2+1)=0
\end{cases}
\end{equation}
where $P_1$ and  $P_2$ are real homogenous polynomials, respectively in $z_1,\overline{z_1}$ and $z_2,\overline{z_2}$, and of respective degrees $d_1$ and $d_2$ with $d_1\leq d_2$. 
For $\ell=1,2$, we write   
$$P_\ell(z_\ell,\overline{z_\ell})=\sum_{j=d_\ell-k_\ell}^{k_\ell} \alpha_{\ell j} z_\ell^j\overline{z_\ell}^{d_\ell-j}$$
where $d_{\ell}/2\leq k_\ell \leq d_{\ell}-1$. In the remainder $O(d_\ell+1)$, $\ell=1,2$, $z$ is of weight $1$ and $\Im m w$ of weight $d_\ell$. We point out that We set $r:=(r_1,r_2)$ and we write $M=\{r=0\}$. We associate to $M$, its model submanifold $M_H= \{\rho=0\} $ where $\rho:=(\rho_1,\rho_2)$ with
\begin{equation}\label{eqmod}
\begin{cases}
\rho_1=\Re e  w_1- P_1(z_1,\overline{z_1})=0\\
\rho_2=\Re e  w_2 - P_2(z_2,\overline{z_2})=0.
\end{cases}
\end{equation}

\vspace{0.5cm}

An analytic disc $f$ 
is {\it attached to  $M$} whenever $f(\partial \Delta) \subset M$. Following \cite{be-de1, be-de-la} (see also Lempert \cite{le} and Tumanov \cite{tu}), we define:
\begin{defi}
Let $k_0>0$  be a positive integer. A holomorphic disc $f: \Delta \to \C^4$ continuous up to  $\partial \Delta$ and attached to  $M=\{r=0\}$ is a {\it $k_0$-stationary disc for $M$} if there 
exists a  holomorphic lift $\bm{f}=(f,\tilde{f})$ of $f$ to the cotangent bundle $T^*\C^{4}$, continuous up to 
 $\partial \Delta$ and such that for all $\zeta \in \partial\Delta$, $\bm{f}(\zeta)$ belongs to  
\begin{equation}\label{eqcon}
\mathcal{N}^{k_0}M(\zeta):= \left\{(z,w,\tilde{z},\tilde{w}) \in T^*\C^{4} \ | \ (z,w) \in M, (\tilde{z},\tilde{w}) \in 
\zeta^{k_0} N^*_{(z,w)} M\setminus \{0\}\right\},
\end{equation}
 where 
$$N^*_{(z,w)} M=\spanc_{\R}\{\partial r_1(z,w), \partial r_2(z,w)\}$$ is the conormal fiber at $(z,w)$ of $M$. 
The map $\bm{f}=(f,\tilde{f})$ is called a {\it $k_0$-stationary lift for $M$} and we denote by $\mathcal{S}(M)$ the set of such lifts  with $f$ non-constant. 
\end{defi}
Equivalently, an analytic disc $f$ attached to  $M$ is $k_0$-stationary for $M$ if there are two continuous functions $c_1, c_2 : \partial \Delta \to \R$ with $\sum_{\ell=1}^2c_\ell(\zeta)\partial r_\ell(0)\neq 0$ for all $\zeta \in \partial \Delta$   and such that the map 
$$\zeta \mapsto \zeta^{k_0} \sum_{\ell=1}^2c_\ell(\zeta)\partial r_\ell\left(f(\zeta), \overline{f(\zeta)}\right)$$
    defined on $\partial \Delta$ extends holomorphically on $\Delta$. We now provide a basic example.

\begin{example}
Consider a model submanifold $M_H= \{\rho=0\}\subset \C^{4} $ of the form \eqref{eqmod}. 
We have 
\begin{equation*}
\begin{cases}
\displaystyle \partial \rho_1 = (\partial_{z} \rho_1, \partial_w \rho_1)=  \left(-P_{1,z_1}(z_1,\overline{z}_1),0, \frac{1}{2},0\right)\\
\\
\displaystyle \partial \rho_2 = (\partial_z \rho_2, \partial_w \rho_2)= \left(0,-P_{2,z_2}(z_2,\overline{z}_2), 0,\frac{1}{2}\right)
 \end{cases}
\end{equation*}
where we use the notation $P_{\ell,z_\ell}=\partial_{z_\ell} P_\ell$. 
We  set $k_0:=\max\{k_1,k_2\}$. Then the disc 
\begin{equation}\label{eqdisini}
\bm{f_0}(\zeta)=(\underbrace{h_0(\zeta),g_0(\zeta)}_{f_0(\zeta)},\tilde{h_0}(\zeta),\tilde{g_0}(\zeta))=\left(1-\zeta,1-\zeta,g_0(\zeta),\tilde{h_0}(\zeta),\frac{c_1}{2}\zeta^{k_0},\frac{c_2}{2}\zeta^{k_0}\right),
\end{equation}
where $c_1,c_2 \in \R$, not both zero, is a $k_0$-stationary lift for  $M_H$. Note that $g_0$ is determined directly by (\ref{eqmod}), while 
 $$\tilde{h_0}(\zeta)=\zeta^{k_0}\left(c_1 P_{1,z_1}\left(1-\zeta,1-\overline{\zeta}\right),c_2 P_{2,z_2}\left(1-\zeta,1-\overline{\zeta}\right)\right).$$

\end{example}

\vspace{0.5cm}

Finally, we introduce the Banach spaces of functions we will work on.  For an integer $k \geq 0$ and $0< \alpha<1$, we denote by $\mathcal C^{k,\alpha}$ the space of 
real-valued functions  defined on $\partial\Delta$ of class $\mathcal{C}^{k,\alpha}$. This space is equipped with its usual  norm.
We consider $\mathcal C_\C^{k,\alpha} = \mathcal C^{k,\alpha} + i\mathcal C^{k,\alpha}$ endowed with the following norm
$$\|f\|_{\mathcal{C}_{\C}^{k,\alpha}}=
\|\Re e  f\|_{\mathcal{C}^{k,\alpha}}+\|\Im m f\|_{\mathcal{C}^{k,\alpha}}.$$ 
The subspace of {\it analytic discs} $\mathcal A^{k,\alpha} \subset \mathcal C_{\C}^{k,\alpha}$ consists of functions $f: \overline{\Delta} \to \C$ which are holomorphic on 
$\Delta$ and such that $f|_{\partial\Delta}  \in \mathcal C_\C^{k,\alpha}.$ 

We also introduce spaces with pointwise constraints. Let $m\geq 1$ be an integer. We denote by $\mathcal C_{0^m}^{k,\alpha}$ the subspace 
 of functions in $\mathcal C^{k,\alpha}$ that can be written as $(1-\zeta)^m v$ with $v\in \mathcal C_\C^{k,\alpha}$. This space is equipped with the norm
$\|(1-\zeta)^m f\|_{\mathcal C_{0^m}^{k,\alpha}}=\vnorm{ f }_{\mathcal C_\C^{k,\alpha}}.$
Finally, we define the subspace $\mathcal A^{k,\alpha}_{0^m}\subset \mathcal C_{\C}^{k,\alpha}$ of functions of the form $(1-\zeta)^m f$, with  $f\in \mathcal A^{k,\alpha}$, equipped with the norm 
$\|(1-\zeta)^m f\|_{\mathcal A^{k,\alpha}_{0^m}}
=\vnorm{ f }_{\mathcal{C}_{\C}^{k,\alpha}}.$
When $m=1$, we simply write $\mathcal A^{k,\alpha}_{0}$ and $\mathcal C_{0}^{k,\alpha}$.

 \section{Construction of generalized stationary discs}
 
Consider a decoupled model submanifold $M_H$ given by \eqref{eqmod}. 
In that case, we obtain explicit defining equations for the fibration $\mathcal{N}^{k_0}M_H(.)$ defined in \eqref{eqcon}. Indeed we have  
\begin{eqnarray*}
(z,w,\tilde{z},\tilde{w}) \in \mathcal{N}^{k_0}M_H(\zeta) &\Leftrightarrow&  
\left\{
\begin{array}{lll} 
\rho_1(z,w,\overline{z},\overline w)=\rho_2(z,w,\overline{z},\overline w)=0 \\
\\
\displaystyle \exists \ c_{\ell}: \partial \Delta\rightarrow \R, 
 (\tilde{z},\tilde{w})=\zeta^{k_0} \sum_{\ell=1}^2c_{\ell}(\zeta)\partial \rho_{\ell}\left(z,\overline{z}\right).
\end{array}
\right.
\end{eqnarray*}
Due to the form of $\rho$, we  have 
$$ \sum_{\ell=1}^2c_{\ell}(\zeta)\partial \rho_{\ell}(z,\overline{z})=
\left(-c_{1}(\zeta)P_{1,z_1}(z_1,\overline{z_1}),-c_{2}(\zeta)P_{2,z_2}(z_2,\overline{z_2}),\frac{c_1(\zeta)}{2},\frac{c_2(\zeta)}{2}\right)
.$$
By a straitghtforward computation, it follows that the $8$ defining equations of $\mathcal{N}^{k_0}M_H(\zeta)$ are given by
\begin{equation*}
\left\{
\begin{array}{lll} 

\tilde{\rho}_1(\zeta)(z,w,\tilde{z},\tilde{w}) & = &  \Re e w_1 - P_1(z_1,\overline z_1)=0\\
\\
\tilde{\rho}_2(\zeta)(z,w,\tilde{z},\tilde{w}) & = & \Re e w_2 - P_2(z_2,\overline z_2)=0\\
\\
\tilde{\rho}_3(\zeta)(z,w,\tilde{z},\tilde{w}) & = & \left(\tilde{z_1}+2\tilde{w_1}P_{1,z_1}(z_1,\overline{z_1})\right) + 
\left(\overline{\tilde{z_1}+2\tilde{w_1}P_{1,z_1}(z_1,\overline{z_1})}\right) = 0\\
\\
\tilde{\rho}_4(\zeta)(z,w,\tilde{z},\tilde{w}) & = &
i\left(\tilde{z_1}+2\tilde{w_1}P_{1,z_1}(z_1,\overline{z_1})\right) - 
i\left(\overline{\tilde{z_1}+2\tilde{w_1}P_{1,z_1}(z_1,\overline{z_1})}\right) = 0\\
\\
\tilde{\rho}_5(\zeta)(z,w,\tilde{z},\tilde{w}) & = &  \left(\tilde{z_2}+2\tilde{w_2}P_{2,z_2}(z_2,\overline{z_2})\right) + 
\left(\overline{\tilde{z_2}+2\tilde{w_2}P_{2,z_2}(z_2,\overline{z_2})}\right) = 0\\
\\
\tilde{\rho}_5(\zeta)(z,w,\tilde{z},\tilde{w}) & = & i\left(\tilde{z_2}+2\tilde{w_2}P_{2,z_2}(z_2,\overline{z_2})\right) - 
i\left(\overline{\tilde{z_2}+2\tilde{w_2}P_{2,z_2}(z_2,\overline{z_2})}\right) = 0\\
\\
\tilde{\rho}_{7}(\zeta)(z,w,\tilde{z},\tilde{w}) & = & i\frac{\tilde{w_1}}{\zeta^{k_0}}-i\zeta^{k_0}\overline{\tilde{w_1}} = 0.\\
\\\tilde{\rho}_{8}(\zeta)(z,w,\tilde{z},\tilde{w}) & = & i\frac{\tilde{w_2}}{\zeta^{k_0}}-i\zeta^{k_0}\overline{\tilde{w_2}} = 0.\\

\end{array}
\right.
\end{equation*}
We set $\tilde{\rho}:=(\tilde{\rho_1},\cdots,\tilde{\rho_8})$. For a general submanifold $M=\{r=0\}$ of the form \eqref{eqdec},  we denote by $\tilde{r}$ the corresponding defining functions of $\mathcal{N}^{k_0}M(\zeta)$. 
This allows to consider stationary lifts as solutions of a nonlinear 
Riemann-Hilbert type problem. More precisely, an analytic disc $\bm{f}:\overline{\Delta} \mapsto T^*\C^4$ is a $k_0$-stationary 
lift for $M$ if and only if 
\begin{equation*}
\tilde{r}(\bm{f})=0 \ \mbox{ on } \partial \Delta.
\end{equation*}
The study of this problem depends essentially on an appropriate application of the implicit function theorem. Accordingly, we consider the following Banach spaces
$$Y= \left(\mathcal A^{k,\alpha}_{0} \right)^2 \times \left(\mathcal A^{k,\alpha}_{0} \right)^2  \times \mathcal A^{k,\alpha}_{0^{d_1-1}}  \times \mathcal A^{k,\alpha}_{0^{d_2-1}} \times \left(\mathcal A^{k,\alpha} \right)^2 $$
$$Z = \left(\mathcal C_0^{k,\alpha}\right)^2  \times \left(\mathcal C_{0^{d_1-1}}^{k,\alpha}\right)^{2} \times \left(\mathcal C_{0^{d_2-1}}^{k,\alpha}\right)^{2}  \times   \left(\mathcal C^{k,\alpha}\right)^2 $$
We will also work with a  Banach space of admissible defining functions that we will define later, in Subsection \ref{secgen}; we will denote this space by $X$ for the time being. We note that although it is important for our approach that the model submanifold is decoupled, 
we allow for not necessarily decoupled perturbations. 
\begin{remark}
The integer $k$ is of little relevance for our work and will not be determined. Essentially, it directly related to the degree $d_1,d_2$.            
\end{remark}
 We now fix an initial model submanifold $M_H$ \eqref{eqmod} and an initial stationary lift $\bm{f_0}$ \eqref{eqdisini}, and we define the  map $F: X \times Y \to  Z $ in a neighborhood of $(\rho,\bm{f_0})$ in $X \times Y$
by 
\begin{equation}\label{eqF}
F(r,\bm{f}):=\tilde{r}(\bm{f}).
\end{equation}
Here, we use the notation $\tilde{r}(\bm{f})(\zeta)=\tilde{r}(\zeta)(\bm{f}(\zeta))$  for $\zeta \in \partial \Delta.$
The map $F$ is of class $\mathcal{C}^1$ (see Lemma 5.1 in \cite{hi-ta} and Lemma 6.1 and Lemma 11.2 in \cite{gl1}). And the zero set of $F(r,\cdot)$ coincides with the set 
$\mathcal{S}(\{r=0\})$ of stationary lifts for $\{r=0\}$. In order to apply the implicit function theorem to $F$, we consider the partial derivative of the map $F$ with respect to the Banach space $Y$ at 
$(\rho,\bm{f_0})$, that is,
\begin{equation*}
\bm{f} \mapsto \partial_2 F(\rho,\bm{f_0})\bm{f}=2\Re e  \left[\overline{G(\zeta)}\bm{f}\right]
\end{equation*}
where $G(\zeta)$ is the following complex valued $8 \times 8$ matrix 
\begin{equation}\label{eqGproof}
G(\zeta):=\left({\tilde\rho}_{\overline{w}}(\bm{f_0}),{\tilde\rho}_{\overline{z}}(\bm{f_0}),
{\tilde\rho}_{\overline{\tilde{z}}}(\bm{f_0}),{\tilde\rho}_{\overline{\tilde{w}}}(\bm{f_0})\right).
\end{equation}
We point out that it is more convenient to reorder coordinates and work with $(w,z,\tilde{z},\tilde{w})$ instead of 
$(z,w,\tilde{z}, \tilde{w})$; so, discs $\bm{f}$ are of the form $(g,h,\tilde{h},\tilde{g})$. 
In order to construct stationary lifts near $\bm{f_0}$ attached to small perturbations of $\{\rho=0\}$,  we then need to  
\begin{enumerate}[i.]
\item show that the map $\partial_2 F(\rho,\bm{f_0}): Y \to Z$ is onto, and
\item determine the real dimension of its kernel (see p. 39 \cite{gl2}).
\end{enumerate}
This relies entirely on the values of particular integers, namely the partial indices and the Maslov index associated to the matrix $G$ \cite{fo,gl1,gl2}. We briefly recall these notions.
Let $A: \partial\Delta \to Gl_N(\C)$  be a smooth map.  
We consider a Birkhoff factorization 
(see Section 3  \cite{gl1}  or  \cite{ve}) of $-\overline{A^{-1}}A$ on $\partial \Delta$:
$$ -\overline{A(\zeta)}^{-1}A(\zeta)=
B^+(\zeta)
\begin{pmatrix}
	\zeta^{\kappa_1}& & & (0) \\ &\zeta^{\kappa_2} & & \\ & & \ddots & \\ (0)& & &\zeta^{\kappa_{N}}
\end{pmatrix}
B^-(\zeta),$$
where  $\zeta \in \partial \Delta$, $B^+: \bar{\Delta}\to Gl_N(\C)$ and 
$B^-: (\C \cup \infty)\setminus\Delta\to Gl_N(\C)$  
 are  smooth  maps, holomorphic on $\Delta$ and $\C \setminus \overline{\Delta}$ respectively. 
The integers $\kappa_1, \dots, \kappa_N$  are  the {\it partial indices} of 
$-\overline{A^{-1}}A$ and  their sum 
$\kappa:=\sum_{j=1}^N\kappa_j$ is the {\it Maslov index} of $-\overline{A^{-1}}A$. 
In the next section, we illustrate this approach on a toy decoupled model. 
\subsection{A toy example }\label{sectoy}
We consider the model submanifold $M_H$ given by
\begin{equation*}
\begin{cases}
\rho_1=\Re e  w_1- |z_1|^4=0\\
\rho_2=\Re e  w_2 - |z_2|^6=0
\end{cases}
\end{equation*}
In that case, $k_0=3$ and the defining equations of $\mathcal{N}^{3}M_H(\zeta)$ are given by
\begin{equation*}
\left\{
\begin{array}{lll} 
\tilde{\rho}_1(\zeta)(z,w,\tilde{z},\tilde{w}) & = &  \Re e w_1 - |z_1|^4=0\\
\\
\tilde{\rho}_2(\zeta)(z,w,\tilde{z},\tilde{w}) & = &  \Re e w_2 - |z_2|^6=0\\
\\
\tilde{\rho}_3(\zeta)(z,w,\tilde{z},\tilde{w}) & = & \left(\tilde{z_1}+4\tilde{w_1}z_1\overline{z_1^2}\right) + 
\left(\overline{\tilde{z_1}+4\tilde{w_1}z_1\overline{z_1^2}}\right) = 0\\
\\
\tilde{\rho}_4(\zeta)(z,w,\tilde{z},\tilde{w}) & = &  i\left(\tilde{z_1}+4\tilde{w_1}z_1\overline{z_1^2}\right) -
i\left(\overline{\tilde{z_1}+4\tilde{w_1}z_1\overline{z_1^2}}\right)= 0\\
\\
\tilde{\rho}_5(\zeta)(z,w,\tilde{z},\tilde{w}) & = &  \left(\tilde{z_2}+6\tilde{w_2}z_2^2\overline{z_2^3}\right) + 
\left(\overline{\tilde{z_2}+6\tilde{w_2}z_2^2\overline{z_2^3}}\right) = 0\\
\\
\tilde{\rho}_6(\zeta)(z,w,\tilde{z},\tilde{w}) & = & i\left(\tilde{z_2}+6\tilde{w_2}z_2^2\overline{z_2^3}\right) - 
i\left(\overline{\tilde{z_2}+6\tilde{w_2}z_2^2\overline{z_2^3}}\right) = 0\\
\\
\tilde{\rho}_{7}(\zeta)(z,w,\tilde{z},\tilde{w}) & = &\displaystyle  i\frac{\tilde{w_1}}{\zeta^{3}}-i\zeta^{3}\overline{\tilde{w_1}} = 0.\\
\\\tilde{\rho}_{8}(\zeta)(z,w,\tilde{z},\tilde{w}) & = &\displaystyle i\frac{\tilde{w_2}}{\zeta^{3}}-i\zeta^{3}\overline{\tilde{w_2}} = 0.\\

\end{array}
\right.
\end{equation*}
We consider the initial $3$-stationary lift for $M_H$
$$\bm{f_0}(\zeta)=(h_0(\zeta),g_0(\zeta),\tilde{h_0}(\zeta),\tilde{g_0}(\zeta))=\left(1-\zeta,1-\zeta,g_0(\zeta),\tilde{h_0}(\zeta),\frac{\zeta^{3}}{4},\frac{\zeta^{3}}{6}\right).$$
The matrix map $G(\zeta)$ defined in \eqref{eqGproof} is 
\begin{equation}\label{eqG}
G(\zeta)=\left(\begin{array}{cccccccccccc}
\frac{1}{2}I_2 &  & (*)\\
   & G_2(\zeta) &  \\
 (0) &   &  -i\zeta^{3} I_2\\
\end{array}\right),
\end{equation} 
with 
\begin{equation*}
G_2=\left(\begin{matrix}
2 \zeta^{3} |1-\zeta|^2 +  \Bar{\zeta}^3 (1-\zeta)^2 & 0    & 1  &     0 \\
2 i \zeta^{3}  |1-\zeta|^2 - i \Bar{\zeta}^3 (1-\zeta)^2 & 0    & -i  &     0 \\
0 & 3 \zeta^{3}  |1-\zeta|^4 + 2\Bar{\zeta}^3 |1-\zeta|^2(1-\zeta)^2    & 0 &      1 \\
0 & 3 i\zeta^{3}  |1-\zeta|^4 - 2 i\Bar{\zeta}^3 |1-\zeta|^2(1-\zeta)^2    & 0  &     -i \\
\end{matrix}\right).
\end{equation*}
We emphasize that, due to the form of $G$ \eqref{eqG},  the surjectivity of $\partial_2 F(\rho,\bm{f_0})$ amounts to the surjectivity of the linear map 
$$L_2:\left(\mathcal A^{k,\alpha}_{0}\right)^2  \times \mathcal A^{k,\alpha}_{0^3} \times \mathcal A^{k,\alpha}_{0^5}  \to 
\left(\mathcal C_{0^3}^{k,\alpha}\right)^2\times \left(\mathcal C_{0^5}^{k,\alpha}\right)^2.$$
defined by $$L_2((1-\zeta)h,(1-\zeta)^3\tilde{h_1},(1-\zeta)^5\tilde{h_2})=2\Re e  \left[\overline{G_2(\zeta)}((1-\zeta)h,(1-\zeta)^3\tilde{h_1},(1-\zeta)^5\tilde{h_2})\right].$$
A direct computation gives 
\begin{equation*}
G_2(\zeta)=\left(\begin{matrix}
(1-\overline{\zeta})^2(-2\zeta^4+\overline{\zeta}) &0    & 1  &     0 \\
-i(1-\overline{\zeta})^2(2\zeta^4+\overline{\zeta}) &0   & -i  &     0 \\
0& (1-\overline{\zeta})^4(3\zeta^5-2)    & 0  &     1 \\
0 & i(1-\overline{\zeta})^4(3\zeta^5+2)   & 0  &     -i \\
\end{matrix}\right),
\end{equation*}
We point out that the matrices $G_2(\zeta)$ and thus $G(\zeta)$ are not invertible at $\zeta=1$. We will desingularize these matrices by decomposing them in order make use of the partial index method.  
We first permute the second and third columns
\begin{equation*}
G_2(\zeta)=\left(\begin{matrix}
 (1-\overline{\zeta})^2(-2\zeta^4+\overline{\zeta})   &  1 &0  &     0 \\
  -2i(1-\overline{\zeta})^2(2\zeta^4+\overline{\zeta}) &   -i &0  &     0 \\
0& 0     &     3(1-\overline{\zeta})^4(3\zeta^5-2) & 1  \\
0 & 0     &     3i(1-\overline{\zeta})^4(3\zeta^5+2) & -i \\
\end{matrix}\right),
\end{equation*}
and factorize
 $$ G_2(\zeta)=\underbrace{
 \left(\begin{matrix}
-2\zeta^4+\overline{\zeta}   & 1 & 0  &     0 \\
-2i\zeta^4-i\overline{\zeta} &   -i & 0  &     0 \\
0& 0      &     3\zeta^5-2  & 1 \\
0 & 0    &      3i\zeta^5+2i& -i   \\
\end{matrix}\right)}_{\widetilde{G_2}(\zeta) \ \in \ Gl_{4}(\C)} \times \underbrace{\left(\begin{matrix}
(1-\overline{\zeta})^2 & 0 & 0 & 0 \\
      0 & 1 &0 & 0 \\
      0 & 0 & (1-\overline{\zeta})^4 & 0 \\
     0 & 0 &   0 &  1
    \end{matrix}\right)}_{D(\zeta)} 
$$
It turns out that the linear operator
\begin{equation*}
\widetilde{L_2}: \left(\mathcal A^{k,\alpha}_{0^{3}} \right)^2 \times \left(\mathcal A^{k,\alpha}_{0^{5}} \right)^2 \to \left(\mathcal C_{0^{3}}^{k,\alpha}\right)^2\times \left(\mathcal C_{0^{5}}^{k,\alpha}\right)^2
\end{equation*}
defined by $$L_2=\widetilde{L_2}\circ \overline{D}$$ is exactly of the form considered in Theorem 2.4 in \cite{be-de2}, with $m_1=3$ and $m_2=5$, and its surjectivity is equivalent to the one of   $L_2$. In order to show its surjectivity, we then need to study the matrix $-\overline{\widetilde{G_2}^{-1}}\widetilde{G_2}$ and show that its partial indices $\kappa_1,\ldots,\kappa_4$ are such that  $k_1, k_2\geq m_1-1=2$ 
and  $k_3,k_4 \geq m_2-1=4$.    
We  first write
\begin{equation*}
 \widetilde{G_2}(\zeta)=\left(\begin{matrix}
B_1 &   (0)   \\
(0) &  B_2 \\
\end{matrix}\right)
\end{equation*}
and note that we  have $|B_1|={4i\zeta^4}$, $|B_2|={-6i\zeta^5} $, and thus $\left|-\overline{\widetilde{G_2}^{-1}}\widetilde{G_2}\right|={\zeta^{18}} .$ Moreover,  
$$-\overline{B_1^{-1}}=\left(\begin{matrix}
{\frac{\zeta^{4}}{4}}  &  {i\frac{\zeta^4}{4}}     \\
{-\frac{2+\zeta^5}{4}} &{i\frac{2-\zeta^5}{4}}  \\
\end{matrix}\right) \mbox{ and } 
-\overline{B_2^{-1}}=\left(\begin{matrix}
 {-\frac{{\zeta}^5}{6}}  & {-i\frac{{\zeta}^5}{6}}    \\
{-\frac{3+2{\zeta}^5}{6}}  & {i\frac{3-2{\zeta}^5}{6}}   \\
\end{matrix}\right)$$ 
from which it follows that 
\begin{equation*}
-\overline{\widetilde{G_2}^{-1}}\widetilde{G_2}=\left(\begin{matrix}
\frac{\zeta^3}{2}  & \frac{\zeta^4}{2}   & 0  &     0 \\
\frac{3\zeta^4}{2}  & -\frac{\zeta^5}{2}  & 0 &     0 \\
0& 0    & \frac{2\zeta^5}{3}    &  -\frac{\zeta^5}{3}  \\
0 &0  &  -\frac{5\zeta^{5}}{3}  &      \frac{-2\zeta^5}{3}  \\
\end{matrix}\right)
\end{equation*}
that can be factorized as
\begin{equation}\label{eqG_2fact}
\left(\begin{matrix}
2+\zeta & 1   & 0  &     0 \\
-2i+i\zeta  & i  & 0  &     0 \\
0  & 0   & -5  &    1 \\
0  & 0   & i  &     i \\
\end{matrix}\right)^{-1}
\left(\begin{matrix}
\zeta^4  & 0   & 0  &     0 \\
0  & \zeta^4  & 0  &     0 \\
0  & 0   & \zeta^5  &     0 \\
0  & 0   & 0  &     \zeta^5 \\
\end{matrix}\right)
\left(\begin{matrix}
2+\overline \zeta  & 1   & 0  &     0 \\
2i-i\overline \zeta  & -i  & 0  &     0 \\
0  & 0   & -5  &    1 \\
0  & 0   & -i  &     -i \\
\end{matrix}\right)
\end{equation}
Thus, the partial indices are $\kappa_1=\kappa_2=4$ and $\kappa_3=\kappa_4=5$, and the maps $\widetilde{L_2}$, $L_2$ and $\partial_2 F(\rho,\bm{f_0})$ are onto.  

We now focus on the kernel of $\partial_2 F(\rho,\bm{f_0})$  and show
\begin{lemma}
The real dimension of the kernel of $\partial_2 F(\rho,\bm{f_0})$ is $20$.
\end{lemma}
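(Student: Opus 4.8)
The plan is to read the kernel off the block triangular structure of $G$ in \eqref{eqG}, reducing the count to three essentially independent Riemann--Hilbert problems. Since the bottom row block of $G$ is $(0,0,-i\zeta^{3}I_2)$, the last two equations $2\Re e[\overline{-i\zeta^{3}}\,\tilde g]=0$ involve only the component $\tilde g=(\tilde g_1,\tilde g_2)$. I would therefore introduce the linear projection $\pi:\ker\partial_2 F(\rho,\bm{f_0})\to\{\tilde g\}$ and compute $\dim_\R\ker\partial_2 F(\rho,\bm{f_0})=\dim_\R\mathrm{im}\,\pi+\dim_\R\ker\pi$. The surjectivity established above (through $L_2$ together with the two corner blocks, which are trivially onto) is exactly what guarantees that $\pi$ is onto the full solution space of the bottom block and that $\ker\pi$ is isomorphic to $\ker L_2$, the $g$-component being then uniquely determined.

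First I would compute $\mathrm{im}\,\pi$, i.e.\ the solution space of the scalar problems $2\Re e[i\zeta^{-k_0}\tilde g_j]=0$ on $\partial\Delta$ with $\tilde g_j\in\mathcal A^{k,\alpha}$, $j=1,2$, where $k_0=3$. Expanding $\tilde g_j=\sum_{n\ge 0}a_n\zeta^n$ and matching Laurent coefficients forces $a_n=0$ for $n>2k_0$ together with the palindromic relations $a_n=\overline{a_{2k_0-n}}$; this leaves $a_0,\dots,a_{k_0-1}\in\C$ free and $a_{k_0}\in\R$, hence a real solution space of dimension $2k_0+1=7$. Equivalently, $-\overline{(-i\zeta^{k_0})^{-1}}(-i\zeta^{k_0})=\zeta^{2k_0}$ has index $2k_0=6$ and the standard scalar count gives $2k_0+1=7$. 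The two components then contribute $\dim_\R\mathrm{im}\,\pi=2(2k_0+1)=14$.

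Next I would compute $\ker\pi$, which has the same dimension as $\ker L_2=\ker\widetilde{L_2}$ (the isomorphism $\overline D$ identifies the two). Using the splitting $\widetilde{G_2}=\mathrm{diag}(B_1,B_2)$ and the partial indices already obtained, $\kappa_1=\kappa_2=4$ for $B_1$ (with vanishing order $m_1=3$) and $\kappa_3=\kappa_4=5$ for $B_2$ (order $m_2=5$), I would invoke the dimension count for pointwise constrained Riemann--Hilbert problems (Theorem 2.4 in \cite{be-de2}). Its content is recovered by writing each component as $(1-\zeta)^{m}v$: on $\partial\Delta$ one has $(1-\overline\zeta)^{m}/(1-\zeta)^{m}=(-1)^m\zeta^{-m}$, so the substitution turns the constrained problem into an unconstrained one whose partial indices are shifted to $\kappa_j-m_j$. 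The real kernel dimension is then $\sum_j\max(\kappa_j-m_j+1,0)=(4-3+1)+(4-3+1)+(5-5+1)+(5-5+1)=2+2+1+1=6$, so $\dim_\R\ker\pi=6$.

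Finally I would confirm that the $g$-block contributes nothing: its homogeneous equation $\Re e[g]=0$ with $g\in(\mathcal A^{k,\alpha}_{0})^2$ has only $g=0$, since a holomorphic disc with vanishing boundary real part is an imaginary constant, killed by $g(1)=0$; thus $g$ is uniquely determined by $(h,\tilde h)$ and adds no dimension. Adding up yields $\dim_\R\ker\partial_2 F(\rho,\bm{f_0})=14+6=20$. The main obstacle is the bookkeeping with the pointwise-vanishing spaces: to make the three counts genuinely additive one must verify that the off-diagonal coupling blocks of $G$ send admissible discs into the correct constrained target spaces $\mathcal C^{k,\alpha}_{0^{m}}$, so that the inhomogeneous middle and top equations stay solvable for every choice of $\tilde g$ and $(h,\tilde h)$. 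Concretely this amounts to checking that the relevant entries carry factors $(1-\zeta)^{a}(1-\overline\zeta)^{b}$ forcing the required vanishing at $\zeta=1$, which is where the explicit form of $\bm{f_0}$ and of the defining functions $\tilde\rho_j$ enters.
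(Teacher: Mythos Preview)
Your proof is correct and reaches the same conclusion, but it is organized differently from the paper. The paper applies Theorem~2.4 of \cite{be-de2} in a single stroke to the full $8\times 8$ desingularized matrix: with $r=4$, $(m_1,m_2,m_3,m_4)=(1,3,5,0)$, $N_j=2$, and partial indices $0,0,4,4,5,5,6,6$ (hence Maslov index $\kappa=30$), the formula $\kappa+8-\sum N_jm_j=38-18=20$ gives the answer at once. You instead exploit the block upper--triangular shape of $G$ to split the count into the $\tilde g$--block ($2(2k_0+1)=14$), the $L_2$--block ($18+4-16=6$, equivalently your $\sum_j(\kappa_j-m_j+1)$), and the $g$--block ($0$), then add. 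This is precisely the additive decomposition underlying the cited theorem, so nothing is lost; the only extra work in your route is the compatibility check you flag at the end---that the off-diagonal coupling entries (arising from $P_{\ell,\overline z_\ell}$ and $P_{\ell,z_\ell}$ evaluated at $1-\zeta$) carry enough powers of $(1-\zeta)$ and $(1-\overline\zeta)$ to land in the constrained target spaces, so that surjectivity of the middle and top blocks really lifts every admissible $\tilde g$. That check is routine here. The paper's route is more compact; yours makes the provenance of each of the $20$ real parameters transparent.
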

\begin{proof}
The proof relies once more on Theorem 2.4 \cite{be-de2} applied to matrix $G$ \eqref{eqG}. Using the same notation, we have $r=4$, $m_1=1$, $m_2=3$, $m_3=5$, $m_4=0$, and $N_1=N_2=N_3=N_4=2$. The Maslov index $\kappa$ is given the sum of the partial indices $0,0,4,4,5,5,6,6$ and is then $\kappa=30$.
Thus, the dimension of the kernel of $\partial_2 F(\rho,\bm{f_0})$ is equal to
$$ \kappa+8-\sum_{j=1}^4N_jm_j= 38-2(1+3+5+0)=20.$$
\end{proof}

Finally, this shows that, for any defining function $r \in X$  close enough to $\rho$, the set of stationary lifts for $\{r=0\}$ near $\bm{f_0}$ is a $\mathcal{C}^1$ submanifold of the Banach space $Y$ of finite  real dimension $20$.

\begin{remark}
Due to the  nature of this example, we cared about finding the explicit factorization \eqref{eqG_2fact}. This can be in general avoided. Indeed, such a factorization relies on a system of linear equations whose solvability imposes conditions on the partial indices. We will use this approach in the general case exposed in the next section.  
\end{remark}

\subsection{Our main result}\label{secgen}

We first discuss the space of defining functions $X$ we will be working on. Choose $\delta > 0$ large enough so that $f_0\left(\overline \Delta\right)$, where $f_0$ is defined in (\ref{eqdisini}), 
is contained in the polydisc ${\delta \Delta}^{4}\subset \C^{4}$.
Following \cite{be-de1, be-de-la}, we consider the affine Banach space $X$ of functions $r\in  \mathcal{C}^{k+3}\left(\delta\Delta
^{4}\right)$  which can be written as
\begin{equation*}
r(z,w) = \rho(z,w) +\theta(z,\Im m w )
\end{equation*}
with $\theta=(\theta_1,\theta_2)$ of the form 
\begin{equation*}
\theta_\ell(z,\Im m w )= \sum_{I+ J=D_\ell+1} z^I \overline z^J  r_{\ell,IJ0}(z)+
\sum_{l=1}^{D_\ell}\sum_{I+ J=D_\ell-l} z^I \overline z^J (\Im m w)^l  r_{\ell,IJl}(z,\Im m w)
\end{equation*}
where $r_{\ell,IJ0} \in  \mathcal C^{k+3}_\C\left(\delta\Delta^{2}\right)$ and 
$ r_{\ell,IJl} \in \mathcal  C^{k+3}_\C\left(\delta\Delta^2\times(-\delta,\delta)^2\right)$. The space $X$ is equipped with the 
 norm 
$$\vnorm{ r }_{X} = \sup \vnorm{ r_{\ell,IJl} }_{\mathcal C^{k+3}},$$
and is then a Banach space since it is isomorphic to a real closed subspace of (a suitable power of) $\mathcal  C^{k+3}_\C\left(\overline{\delta\Delta^2}\times (-\delta,\delta)^2\right)$. 

\vspace{0.5cm}
We now state our main result
\begin{theo}\label{theodiscs}
Let  $M_H=\{\rho=0\} \subset \C^{4}$ be a decoupled model submanifold  of the form (\ref{eqmod}). We assume that  the zero sets of  the Laplacians of $P_1$ and  $P_2$
are respectively  $\{0\} \times \C $ and $\C \times \{0\}$. 
Consider an initial lift of a stationary disc $\bm{f_0}=(h_0,g_0,\tilde{h_0},\tilde{g_0}) \in Y$ of the form \eqref{eqdisini}. Then there exist  an open 
neighborhood $U$ of $\rho$ in $X$ and a real number $\varepsilon>0$ such that for any defining function $r \in U$, the set 
$$\{\bm{f} \in \mathcal{S}(\{r=0\})\ \ | \  
\|\bm{f}-\bm{f_0}\|_{1,\alpha}<\varepsilon\}$$ 
forms a $\mathcal{C}^1$ real submanifold of finite dimension  of the Banach space of analytic discs.    
\end{theo}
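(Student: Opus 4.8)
The plan is to apply the implicit function theorem with parameters (see p.~39 of \cite{gl2}) to the $\mathcal{C}^1$ map $F:X\times Y\to Z$ of \eqref{eqF} at the point $(\rho,\bm{f_0})$. Since $F(\rho,\bm{f_0})=0$ and the fiber $F(r,\cdot)^{-1}(0)$ is exactly $\mathcal{S}(\{r=0\})$, it suffices to show that the partial differential $\partial_2F(\rho,\bm{f_0}):Y\to Z$, namely $\bm{f}\mapsto 2\Ree\big[\overline{G(\zeta)}\bm{f}\big]$, is surjective with a kernel of finite real dimension. The implicit function theorem will then furnish a neighborhood $U$ of $\rho$ and $\varepsilon>0$ so that for every $r\in U$ the set $\{\bm{f}\in\mathcal{S}(\{r=0\}):\|\bm{f}-\bm{f_0}\|_{1,\alpha}<\varepsilon\}$ is a $\mathcal{C}^1$ real submanifold of $Y$ of dimension $\dim_{\R}\ker\partial_2F(\rho,\bm{f_0})$. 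Note that the differential is taken at the decoupled model $\rho$, which is what makes the matrix $G$ tractable, while the perturbations $r$ need not be decoupled. Both the surjectivity and the kernel dimension are controlled by the partial indices and the Maslov index of the loop $-\overline{G^{-1}}G$, so everything reduces to a Birkhoff-type analysis of $G$, exactly as in the toy model of Subsection~\ref{sectoy}.

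First I would write down $G(\zeta)$ from $\tilde\rho=(\tilde\rho_1,\dots,\tilde\rho_8)$ at $\bm{f_0}$. Because $M_H$ is decoupled, $G$ is block triangular with diagonal blocks $\tfrac12 I_2$ (from $\tilde\rho_1,\tilde\rho_2$), a middle $4\times4$ block $G_2(\zeta)$ (from $\tilde\rho_3,\dots,\tilde\rho_6$), and $-i\zeta^{k_0}I_2$ (from $\tilde\rho_7,\tilde\rho_8$); a column permutation further splits $G_2$ as $B_1\oplus B_2$, where $B_\ell$ depends only on $P_\ell$ and on the $\ell$-th coordinate of the initial data. The corner blocks are invertible on $\partial\Delta$ and contribute the partial indices $0,0$ and $2k_0,2k_0$, and as in the toy model they let one solve for the $g$- and $\tilde g$-components directly, so the surjectivity of $\partial_2F(\rho,\bm{f_0})$ reduces to that of the operator $L_2$ attached to $G_2$. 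I would then desingularize $G_2$ at $\zeta=1$ by factoring $G_2=\widetilde{G_2}\,D$, with $D(\zeta)$ diagonal carrying the vanishing $(1-\overline\zeta)^{d_\ell-2}$. This is where the hypothesis on the Laplacians of $P_1,P_2$ enters: the entries of $B_\ell$ are built from $P_{\ell,z_\ell}(1-\zeta,1-\overline\zeta)$ and its conjugate, and the assumption that the Laplacian of $P_\ell$ vanishes only at $z_\ell=0$ guarantees that $P_{\ell,z_\ell\overline{z_\ell}}(1-\zeta,1-\overline\zeta)$ vanishes on $\partial\Delta$ only at $\zeta=1$ and to the exact expected order. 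Hence the singularity of $G_2$ concentrates at $\zeta=1$ with the precise orders $d_\ell-2$, the desingularized $\widetilde{G_2}$ takes values in $Gl_4(\C)$ on all of $\partial\Delta$, and the factorization $L_2=\widetilde{L_2}\circ\overline D$ puts the problem in the form of Theorem~2.4 in \cite{be-de2} with block orders $m=d_1-1$ and $m=d_2-1$.

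The core of the argument, and the step I expect to be the main obstacle, is to determine the partial indices $\kappa_1,\dots,\kappa_4$ of $-\overline{\widetilde{G_2}^{-1}}\widetilde{G_2}$ for arbitrary degrees $d_1,d_2$ and to verify the bounds $\kappa_1,\kappa_2\ge d_1-2$ and $\kappa_3,\kappa_4\ge d_2-2$ demanded by the surjectivity criterion of \cite{be-de2}. For a single $P_\ell$ of general degree I do not expect the clean closed-form factorization \eqref{eqG_2fact} that was available in the toy case; instead, as announced in the remark closing Subsection~\ref{sectoy}, I would proceed abstractly. The sum $\kappa_{2\ell-1}+\kappa_{2\ell}$ is pinned down immediately by the winding number of $\det\big(-\overline{B_\ell^{-1}}B_\ell\big)$ along $\partial\Delta$, which is twice that of $\det B_\ell$ and is governed by $d_\ell$ and $k_0$. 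To separate the two indices in each block I would set up the linear system defining a Birkhoff factorization $B^+\Lambda B^-$ of $-\overline{B_\ell^{-1}}B_\ell$ and show that its solvability, combined with the nonvanishing furnished by the Laplacian hypothesis, forces the indices to be as balanced as possible and hence to satisfy the required lower bounds. Granting this, Theorem~2.4 in \cite{be-de2} yields surjectivity of $\widetilde{L_2}$, and therefore of $L_2$ and of $\partial_2F(\rho,\bm{f_0})$.

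It then remains to count the kernel. With all eight partial indices of $G$ in hand (the two corner pairs $0,0$ and $2k_0,2k_0$ together with the four indices of $\widetilde{G_2}$), the Maslov index $\kappa$ is their sum, and the dimension formula of \cite{be-de2} gives
\[
\dim_{\R}\ker\partial_2F(\rho,\bm{f_0})=\kappa+8-\sum_{j}N_jm_j,
\]
with $N_j=2$ and block orders $m_j\in\{1,\,d_1-1,\,d_2-1,\,0\}$ read off from $Y$, exactly as in the kernel lemma of the toy model. This is a finite integer, so the implicit function theorem applies and produces the desired $\mathcal{C}^1$ real submanifold of finite dimension for every $r\in U$, which proves Theorem~\ref{theodiscs}.
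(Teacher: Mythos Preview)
Your outline matches the paper's proof closely: block-triangular $G$, reduction to $G_2$, desingularization $G_2=\widetilde{G_2}\,D$, invertibility of $\widetilde{G_2}$ via the Laplacian hypothesis, and appeal to Theorem~2.4 of \cite{be-de2}. Two points in your core step need correcting, though.

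First, the lower bounds $\kappa_{2\ell-1},\kappa_{2\ell}\ge d_\ell-2$ are not obtained by arguing that the indices are ``as balanced as possible,'' nor does the Laplacian hypothesis play a role here. The paper writes $\zeta^{k_0}P_{\ell,z_\ell\bar z_\ell}(1-\zeta,1-\bar\zeta)=(1-\bar\zeta)^{d_\ell-2}Q_\ell(\zeta)$ and $\zeta^{k_0}P_{\ell,z_\ell z_\ell}(1-\zeta,1-\bar\zeta)=(1-\bar\zeta)^{d_\ell-2}S_\ell(\zeta)$ with $Q_\ell,S_\ell$ holomorphic polynomials, records that $S_\ell$ is divisible by $\zeta^{k_0-k_\ell+d_\ell-2}$ and that $Q_\ell$ has degree at most $k_0+k_\ell-1$, and then reads the bound off one line of the $\Theta$-system (Lemma~5.1 of \cite{gl1}): from $a-b\,\bar\zeta^{\,d_\ell-2}S_\ell=-\bar Q_\ell\,\zeta^{\kappa}\,\bar b$ the left side is holomorphic by the divisibility of $S_\ell$, hence so is the right side, which forces $\kappa\ge k_0+k_\ell-2\ge 2k_\ell-2\ge d_\ell-2$. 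The Laplacian hypothesis is used only earlier, to make $Q_\ell$ nonvanishing on $\partial\Delta$ and hence $\widetilde{G_2}$ invertible.

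Second, the individual partial indices of $\widetilde{G_2}$ are never pinned down---only these lower bounds---so your phrase ``with all eight partial indices of $G$ in hand'' overstates what is available. For the kernel dimension the paper bypasses the individual indices entirely and computes the Maslov index as the winding number of $\det\bigl(-\overline{\tilde G^{-1}}\tilde G\bigr)$, obtaining $\kappa=\mathrm{ind}(-\overline{Q_1}^{-1}Q_1)+\mathrm{ind}(-\overline{Q_2}^{-1}Q_2)+4k_0$, and then applies the formula $\kappa+8-\sum_j N_j m_j$ exactly as you describe.
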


\begin{proof}
Following exactly the scheme used in the toy example  (Subsection \ref{sectoy}), we have  
\begin{equation*}
G(\zeta)=\left(\begin{array}{cccccccccccc}
\frac{1}{2}I_2 &  & (*)\\
   & G_2(\zeta) &  \\
 (0) &   &  -i\zeta^{k_0} I_2\\
\end{array}\right),
\end{equation*} 
 where $G_2$, after permutation of its second and third  columns, is given by 

\begin{equation*}
    G_2(\zeta)=\left(\begin{matrix}
\zeta^{k_0}P_{1,z_1\Bar{z_1}}+\Bar{\zeta}^{k_0}P_{1,\Bar{z_1}\Bar{z_1}} & 1 &0 & 0 \\
i(\zeta^{k_0}P_{1,z_1\Bar{z_1}}-\Bar{\zeta}^{k_0}P_{1,\Bar{z_1}\Bar{z_1}}) & -i    & 0 & 0 \\
0 & 0   & \zeta^{k_0}P_{2,z_2\Bar{z_2}}+\Bar{\zeta}^{k_0}P_{2,\Bar{z_2}\Bar{z_2}}  & 1 \\
0 & 0   & i(\zeta^{k_0}P_{2,z_2\Bar{z_2}}-\Bar{\zeta}^{k_0}P_{2,\Bar{z_2}\Bar{z_2}}) & -i \\
\end{matrix}\right).
\end{equation*}

We now write
$$\left\{
\begin{array}{lll} 
\zeta^{k_0}P_{\ell,z_\ell \overline z_\ell}\left(1-\zeta, \overline{1-\zeta}\right) = (1 - \overline{\zeta})^{d_\ell-2}Q_{\ell}(\zeta) \\
\\
\zeta^{k_0}P_{\ell,z_\ell  z_\ell}\left(1-\zeta, \overline{1-\zeta}\right) = (1 - \overline{\zeta})^{d_\ell-2}S_{\ell}(\zeta)\\
\end{array}\right.$$
where $Q_{\ell}$ and $S_{\ell}$ are holomorphic polynomials. Note that each $Q_{\ell}$ has  degree at most $k_0 + k_\ell - 1$ and is  divisible by 
$\zeta^{k_0-k_\ell+d_\ell-1}$,
 while each $S_\ell$ has degree at most $k_0+k_\ell- 2$ and is divisible by $\zeta^{k_0-k_\ell+d_\ell-2}$.  So we have 
\begin{equation*}
    G_2(\zeta)=\left(\begin{matrix}
(1 - \overline{\zeta})^{d_1-2}\left(Q_{1}(\zeta)+\zeta^{d_1-2}\overline{S}_{1}(\zeta)\right) & 1 &0 & 0 \\
i(1 - \overline{\zeta})^{d_1-2}\left(Q_{1}(\zeta)-\zeta^{d_1-2}\overline{S}_{1}\right)& -i    & 0 & 0 \\
0 & 0   & (1 - \overline{\zeta})^{d_2-2}\left(Q_{2}(\zeta)+\zeta^{d_2-2}\overline{S}_{2}(\zeta)\right)  & 1 \\
0 & 0   & i((1 - \overline{\zeta})^{d_2-2}\left(Q_{2}(\zeta)-\zeta^{d_2-2}\overline{S}_{2}(\zeta)\right) & -i \\
\end{matrix}\right).
\end{equation*}
and factorize
$$ G_2(\zeta)=\underbrace{
 \left(\begin{matrix}
Q_{1}(\zeta)+\zeta^{d_1-2}\overline{S}_{1}(\zeta)  & 1 & 0  &     0 \\
iQ_{1}(\zeta)-i\zeta^{d_1-2}\overline{S}_{1}(\zeta) &   -i & 0  &     0 \\
0& 0      &     Q_{2}(\zeta)+\zeta^{d_2-2}\overline{S}_{2}(\zeta)  & 1 \\
0 & 0    &      iQ_{2}(\zeta)-i\zeta^{d_2-2}\overline{S}_{2}(\zeta) & -i   \\
\end{matrix}\right)}_{\widetilde{G_2}(\zeta)} \times \underbrace{\left(\begin{matrix}
(1-\overline{\zeta})^{d_1-2} & 0 & 0 & 0 \\
 0 & 1 &0 & 0 \\
 0 & 0 & (1-\overline{\zeta})^{d_2-2} & 0 \\
 0 & 0 &   0 &  1
 \end{matrix}\right)}_{D(\zeta)}. 
$$
 
It follows that $\widetilde{G_2}(\zeta) \in  \ Gl_{4}(\C)$ since its determinant is equal to $-4Q_1(\zeta)Q_2(\zeta)$ and is non vanishing on $\partial \Delta$
due to our assumption on the zero sets of the Laplacians of $P_1$ and $P_2$.    
Accordingly, the linear operator
\begin{equation*}
\widetilde{L_2}: \left(\mathcal A^{k,\alpha}_{0^{d_1-1}} \right)^2 \times \left(\mathcal A^{k,\alpha}_{0^{d_2-1}} \right)^2 \to \left(\mathcal C_{0^{d_1-1}}^{k,\alpha}\right)^2\times \left(\mathcal C_{0^{d_2-1}}^{k,\alpha}\right)^2
\end{equation*}
given by $L_2=\widetilde{L_2}\circ \overline{D}$ is of the form considered in Theorem 2.4 in \cite{be-de2}, with $m_1=d_1-1$ and $m_2=d_2-1$. To prove its surjectivity, we will 
now  study the partial indices  $\kappa_1,\ldots,\kappa_4$ of $-\overline{\widetilde{G_2}^{-1}}\widetilde{G_2}$ and show that they satisfy 
$k_1, k_2\geq d_1-2$ and  $k_3,k_4 \geq d_2-2$.
 
By a direct computation, we obtain 
\begin{equation*}
-\overline{\widetilde{G_2}^{-1}}\widetilde{G_2}=
\left(\begin{matrix}
D_1  & (0)  \\
(0) & D_2
\end{matrix}\right)
\mbox{  with } 
D_\ell=-\frac{1}{\overline{Q}}\left(\begin{matrix}
\zeta^{d_\ell-2}\overline{S}_\ell  & 1  \\
|Q_\ell|^2-|S_\ell|^2  & -\overline{\zeta}^{d_\ell-2}S_\ell  \\
\end{matrix}\right)
\end{equation*}
Then there exists a smooth map $\Theta: \overline{\Delta} \to GL_4(\C)$, holomorphic on $\Delta$, such that 
$$\Theta=\left(\begin{matrix}
\Theta_1  & (0)  \\
(0) & \Theta_2
\end{matrix}\right) \mbox{ and } -\Theta \overline {G_2^{-1}} G_2 = \Lambda \overline \Theta \mbox{ on }\partial \Delta
$$ 
where $\Lambda$ is the diagonal $4\times 4$ matrix with entries $\kappa_1,\ldots,\kappa_4$  (see Lemma 5.1 \cite{gl1}).   
We write 
$$\Theta_1=\left(\begin{matrix}
a_1  & b_1  \\
c_1 & d_1
\end{matrix}\right)$$
and obtain the following system

\begin{equation*}
\left\{
\begin{array}{lll} 
a_1\zeta^{d_1-2}\overline{S}_1+b_1(|Q_1|^2-|S_1|^2)&=& -\overline Q_1 \zeta^{\kappa_1} \overline a_1 \\ 
\\
a_1-b_1\overline{\zeta}^{d_1-2}S_1 &=&- \overline Q_1 \zeta^{\kappa_1} \overline b_1 \\
\\
c_1\zeta^{d_1-2}\overline{S}_1+d_1(|Q_1|^2-|S_1|^2) &=& - \overline Q_1 \zeta^{\kappa_2} \overline c_1 \\ 
\\
c_1-d_1\overline{\zeta}^{d_1-2}S_1 &=& - \overline Q_1 \zeta^{\kappa_2} \overline d_1. \\
\end{array}
\right.
\end{equation*}
Since $S_1$ is divisible by $\zeta^{k_0-k_1+d_1-2}$, the left-hand sides, and thus the right-hand sides, of the second and fourth equations are holomorphic.  
It follows that 
$$\kappa_1,\kappa_2 \geq k_0+k_1-2\geq 2k_1-2\geq d_1-2.$$
Similarly we obtain 
$$\kappa_3,\kappa_4 \geq k_0+k_2-2\geq 2k_2-2\geq d_2-2.$$

Theorem 2.4 in \cite{be-de2} then implies that the operator $\widetilde{L_2}$, and thus $L_2$ and  $\partial_2 F(\rho,\bm{f_0})$, are onto.

 \vspace{0.5cm}
 
 We will now focus on the real dimension of the kernel of $\partial_2 F(\rho,\bm{f_0})$ which relies once more on Theorem 2.4 \cite{be-de2}. We first write 
 $$ G(\zeta)=\underbrace{\left(\begin{array}{cccccccccccc}
\frac{1}{2}I_2 &  & (*)\\
   & G_2(\zeta) &  \\
 (0) &   &  -i\zeta^{k_0} I_2\\
\end{array}\right)}_{\tilde{G}(\zeta)}  \left(\begin{array}{cccccccccccc}
I_2 &  & (0)\\
   & D(\zeta) &  \\
 (0) &   &  I_2\\
\end{array}\right) 
$$
and we note that the kernels of   $\partial_2 F(\rho,\bm{f_0})$ and the operator
and 
$$\left(\mathcal A^{k,\alpha}_{0}\right)^2 \times \left(\mathcal A^{k,\alpha}_{0^{d_1-1}} \right)^2\times \left(\mathcal A^{k,\alpha}_{0^{d_2-1}} \right)^2\times \left(\mathcal A^{k,\alpha}\right)^2  \ni \bm{f} \mapsto 2\Re e  \left[\overline{\tilde{G}(\zeta)}\bm{f}\right]$$
are of the same dimension. Using the same notation as Theorem 2.4 \cite{be-de2}, we have  $r=4$, $m_1=1$, $m_2=d_1-1$, $m_3=d_2-2$, $m_4=0$, and $N_1=N_2=N_3=N_4=2$. This time, since we have not determined the partial indices, we will use the fact that the Maslov index $\kappa$ is also equal to the winding number at the origin of  the map
$$\zeta \mapsto \det\left(-\overline{\tilde{G}(\zeta)^{-1}}\tilde{G}(\zeta)\right).$$
See e.g. \cite{gl2} or Lemma B.1 \cite{bl} for a proof of this fact.
We obtain directly
$$\kappa= {\rm ind} (-\overline{Q}^{-1}_1Q_1) + {\rm ind} (-\overline{Q}^{-1}_2Q_2)+4k_0$$
leading to the dimension of the kernel of $\partial_2 F(\rho,\bm{f_0})$ to be equal to

$$ \kappa+8-\sum_{j=1}^4N_jm_j = {\rm ind} (-\overline{Q_1}^{-1}Q_1) + {\rm ind} (-\overline{Q_2}^{-1}Q_2)+4k_0+10-2(d_1+d_2).$$
This achieves the proof of Theorem \ref{theodiscs}.
\end{proof}

\noindent  {\it Acknowledgments.}  This work was done in the framework of the  Summer Research Camp in Mathematics designed by the Department of Mathematics at the American University of Beirut (AUB) and that benefitted from a generous support from the Center for Advanced Mathematical Sciences and the Faculty of Arts and Sciences at AUB

\vskip 1cm

{\small
\noindent Mohammad Tarek Al Masri, Florian Bertrand, Jad Mchaimech, Hadi Zoghaib\\
Department of Mathematics\\\
American University of Beirut, Beirut, Lebanon\\{\sl E-mail addresses}: maa369@mail.aub.edu, fb31@aub.edu.lb, jam37@mail.aub.edu, hsz08@mail.aub.edu\\}

{\small
\noindent Lea Oueidat\\
Department of Mathematics\\
University of York, York, United Kingdom\\{\sl E-mail addresses}: gxz511@york.ac.uk\\}

\end{document}